\newtheorem{thm}{Theorem}[section]
\theoremstyle{definition}
\newtheorem{dfn}[thm]{Definition}
\theoremstyle{plain}
\newcommand{\Tleq}{\leq_{\mathbf{T}}}
\newcommand{\Tlneq}{\lneq_{\mathbf{T}}}
\newcommand{\Nat}[0]{\mathbb{N}}
\newcommand{\PowN}[0]{\mathcal{P}(\mathbb{N})}
\newcommand{\lt}[0]{\mathrm{L}_2}
\newcommand{\z}[0]{\mathsf{Z}}
\newcommand{\rca}[0]{\mathsf{RCA}}
\newcommand{\wwkl}[0]{\mathsf{WWKL}}
\newcommand{\wkl}[0]{\mathsf{WKL}}
\newcommand{\rt}[0]{\mathsf{RT}}
\newcommand{\aca}[0]{\mathsf{ACA}}
\newcommand{\atr}[0]{\mathsf{ATR}}
\newcommand{\dica}[0]{\Delta^1_1\text{-}\mathsf{CA}}
\newcommand{\pica}[0]{\Pi^1_1\text{-}\mathsf{CA}}
\newcommand{\pa}[0]{\mathrm{PA}}
\title{Set existence principles and closure conditions:
unravelling the standard view of reverse mathematics}
\author{Benedict Eastaugh%
\footnote{%
Munich Center for Mathematical Philosophy,
LMU Munich,
Geschwister-Scholl-Platz 1,
80539 Munich,
Germany.
Email: \href{mailto:benedict@eastaugh.net}{benedict@eastaugh.net}
}}
\begin{document}

\maketitle

\begin{abstract}
    \noindent%
    It is a striking fact from reverse mathematics that almost all theorems of
    countable and countably representable mathematics are equivalent to just
    five subsystems of second order arithmetic.
    The standard view is that the significance of these equivalences lies in
    the set existence principles that are necessary and sufficient to prove
    those theorems.
    In this article I analyse the role of set existence principles in reverse
    mathematics, and argue that they are best understood as closure conditions
    on the powerset of the natural numbers.
\end{abstract}

\section{Introduction}
\label{sec:introduction}

What axioms are truly necessary for proving particular mathematical theorems?
To answer this question, Harvey Friedman \citeyearpar{Friedman1975,
Friedman1976} initiated a research program called reverse mathematics. By
formalising ordinary mathematical concepts in the language of second order
arithmetic, Friedman was able to show not only that many theorems of ordinary
mathematics could be proved in relatively weak subsystems of second order
arithmetic, but that such theorems often turned out to be equivalent to the
axioms used to prove them, with this equivalence being provable in a weak base
theory.
Consider, for example, the least upper bound axiom for the real numbers. Not
only are different formulations of the least upper bound axiom provably
equivalent, but they are also provably equivalent to the axiom scheme of
arithmetical comprehension. Reverse mathematics thus provides a unified
framework within which one can precisely determine the strength of the axioms
necessary to prove theorems of ordinary mathematics.

Reverse mathematics is carried out within subsystems of second order arithmetic,
for which the canonical reference is \citet{Simpson2009}.
These subsystems are formulated in the \emph{language of second order
arithmetic} $\lt$. This is a two-sorted first order language, the first sort of
variables $x_0, x_1, \dotsc$ being called \emph{number variables}, and intended
to range over natural numbers, and the second sort of variables
$X_0, X_1, \dotsc$ being called \emph{set variables} and intended to range over
sets of natural numbers. $\lt$ includes the non-logical symbols of first order
Peano arithmetic (constant symbols $0$ and $1$, function symbols $+$ and
$\times$, and the relation symbol $<$), as well as a relation symbol $\in$
standing for set membership.

The system of \emph{full second order arithmetic} or $\z_2$ consists of: the
\emph{basic axioms} of Peano arithmetic minus the induction scheme, $\pa^-$;
the \emph{$\Sigma^0_1$ induction scheme}, consisting of the universal closures
of all formulas of the form
\begin{equation*}
    (\varphi(0) \wedge \forall{n} (\varphi(n) \rightarrow \varphi(n + 1)))
    \rightarrow
    \forall{n} \; \varphi(n)
    \label{eqn:sigma_0_1_ind}
    \tag{$\Sigma^0_1\text{-}\mathsf{IND}$}
\end{equation*}
where $\varphi(n)$ is a $\Sigma^0_1$ formula (which may contain free number and
set variables);
and the \emph{second order comprehension scheme}, which consists of the
universal closures of all formulas of the form
\begin{equation*}
    \exists{X} \forall{n} (n \in X \leftrightarrow \varphi(n))
    \label{eqn:full_ca}
    \tag{$\Pi^1_\infty\text{-}\mathsf{CA}$}
\end{equation*}
where $\varphi(n)$ is an $\lt$-formula with $X$ not free (but possibly with
other free number and set variables). $\z_2$ is, in the context of arithmetic, a
very strong system, with consistency strength far beyond that of first order
Peano arithmetic $\pa$.
The systems employed in reverse mathematics are all subsystems of $\z_2$, in the
sense that all of their axioms are theorems of $\z_2$.%
\footnote{%
In order to achieve a streamlined and uniform presentation of $\z_2$ and its
subsystems, this article deviates slightly from \citet{Simpson2009}, mainly in
its treatment of induction axioms. This does not affect the results quoted in
any material way.
}

The most fundamental subsystem of second order arithmetic for reverse
mathematics is $\rca_0$. Its axioms are: the basic axioms; the $\Sigma^0_1$
induction scheme; and the \emph{recursive} (or $\Delta^0_1$) \emph{comprehension
scheme}, which consists of the universal closures of all formulas of the form
\begin{equation}
    \forall{n} (\varphi(n) \leftrightarrow \psi(n))
    \rightarrow
    \exists{X} \forall{n} (n \in X \leftrightarrow \varphi(n))
    \label{eqn:delta_0_1_ca}
    \tag{$\Delta^0_1\text{-}\mathsf{CA}$}
\end{equation}
where $\varphi$ is a $\Sigma^0_1$ formula, $\psi$ is a $\Pi^0_1$ formula, and
$X$ is not free in $\varphi$ (although $\varphi$ may contain other free
variables). It is the Recursive Comprehension Axiom that gives $\rca_0$ its
name.%
\footnote{%
The `$0$' subscript indicates that this is a system with a restricted induction
axiom, i.e. it does not prove the full second order induction scheme.
}
$\rca_0$ is the standard base theory for reverse mathematics: the system in
which the equivalences between theorems and axioms are typically proved. In the
rest of this article, unqualified expressions of the form ``$\tau$ is equivalent
to $S$'', where $\tau$ is a mathematical theorem and $S$ a subsystem of second
order arithmetic, should be taken to mean that a faithful formalisation of
$\tau$ can be proved in $\rca_0$ to be implied by, and imply, the axioms of $S$.
Similarly, ``$\tau$ reverses to $S$'' means that a faithful formalisation of
$\tau$ can be proved in $\rca_0$ to imply the axioms of $S$.%
\footnote{%
The term ``reversal'' implies that the implication $S \Rightarrow \tau$ is
already known, so I will not characterise implications $\tau \Rightarrow T$
where $T$ is proof-theoretically weaker than $\tau$ as ``reversals''; instead I
shall simply call them ``implications''.
}
As $\rca_0$ is the base theory for most reverse mathematics, the other systems
considered in this article generally include the axioms of $\rca_0$. The system
$\wkl_0$, for example, consists of the axioms of $\rca_0$ plus the axiom known
as \emph{weak König's lemma}, which states that every infinite subtree of the
full binary tree $2^{<\Nat}$ has an infinite path through it. In such cases we
say that the \emph{distinguishing axiom} of $\wkl_0$ is weak König's lemma.

While there are many other subsystems of second order arithmetic, a particularly
important group are known as the Big Five. As well as $\rca_0$ and $\wkl_0$,
this group consists of $\aca_0$, whose distinguishing axiom is the
\emph{arithmetical comprehension scheme}; $\atr_0$, whose distinguishing axiom
is the scheme of \emph{arithmetical transfinite recursion}; and $\pica_0$, whose
distinguishing axiom is the \emph{$\Pi^1_1$-comprehension scheme}.%
\footnote{%
The historical development of these canonical five subsystems of second order
arithmetic is traced in \citet{DeaWal2016}.
}
Each of these systems is stronger than the preceding systems, in the following
sense: a system $S_2$ is \emph{proof-theoretically stronger} than a system
$S_1$, $S_1 < S_2$, just in case $S_2$ proves all the theorems of $S_1$, but
there is at least one theorem of $S_2$ that is not a theorem of $S_1$. For the
Big Five we have that $\rca_0 < \wkl_0 < \aca_0 < \atr_0 < \pica_0$, although it
should be noted that increases in proof-theoretic strength do not necessarily
involve increases in consistency strength.

In most cases studied to date, ordinary mathematical theorems concerning
countable and countably-representable objects have been found to be either
provable in the base theory $\rca_0$ or are equivalent over $\rca_0$ to another
of the Big Five.
This is a remarkable phenomenon: \citet[p.~115]{Simpson2010} estimates that
hundreds of theorems have been found that fall into these five equivalence
classes.
There is also is a substantial and growing body of statements that fall outside
of this classification, known as the Reverse Mathematics Zoo
\citep{Dzhafarov2015}. These were originally drawn largely from infinitary
combinatorics, by considering weakenings of Ramsey's theorem; an excellent
starting point for the study of these statements is \citet{Hirschfeldt2014}.
However, the Zoo now contains examples from many other areas of mathematics
including model theory, set theory, the theory of linear orderings, and
descriptive set theory.
Relatively few theorems from other core areas of mathematics such as analysis or
algebra are known to lie outside the Big Five, although this situation is also
changing: a recent example from analysis is Birkhoff's recurrence theorem
\citep{Day2016}.

Explaining why the Big Five phenomenon occurs is an important task, and if an
account of the significance of reversals could accomplish it, this would be a
substantial accomplishment. However, it is also not one I take to be necessary
for such accounts, and consequently I shall not address it directly in this
article.
One explanation offered by \citet{Montalban2011} is that the Big Five, and
perhaps some other subsystems, are robust systems: they are invariant under
certain perturbations of the axioms involved.
Elaborations on this view can be found in \citet{Sanders2012, Sanders2014}.

The standard view in the field of reverse mathematics is that the significance
of reversals lies in their ability to demonstrate what set existence principles
are necessary to prove theorems of ordinary mathematics. For example, they show
that the arithmetical comprehension scheme is necessary in order to prove the
least upper bound axiom for the real numbers, because the latter implies the
former over the weak base theory $\rca_0$. In contrast, arithmetical
comprehension is not necessary in order to prove the Hahn--Banach theorem for
separable Banach spaces, since this theorem can be proved in a weaker system
than $\aca_0$, namely $\wkl_0$. However, weak König's lemma is necessary to
prove the separable Hahn--Banach theorem, and this necessity is demonstrated by
the fact that $\rca_0$ proves that the latter implies the former.
A view of this sort, in more or less the terms just used, is articulated by
\citet[p.~2]{Simpson2009} as his ``Main Question'':
``Which set existence axioms are needed to prove the theorems of ordinary,
non-set-theoretic mathematics?''.
Similar sentiments can be found elsewhere.%
\footnote{%
Such as in
\citet*[p.~141]{FriSimSmi1983},
\citet[p.~557]{BroSim1986},
\citet[p.~172]{Jae1987},
\citet*[p.~191]{BroGiuSim2002},
\citet[p.~139]{AviSim2006} and
\citet*[p.~2]{DorDzhHir2015}.
There are many more examples to be found in the reverse mathematics literature,
although it should be noted that many of the participants are students or
coauthors of Simpson, and thus the similarity in language is not surprising.
}

It is not difficult to see why the standard view became and remains the
orthodoxy, since in some sense the view can be simply read off from the
mathematical results: each of the distinguishing axioms of the Big Five assert
the existence of certain sets of natural numbers, and the hierarchy of proof-%
theoretic strength that we see in the Big Five can thus be understood as a
hierarchy of set existence principles of increasing power. It also provides
precise explanations of commonly expressed opinions such as ``Gödel's
completeness theorem is nonconstructive'': the completeness theorem reverses to
$\wkl_0$, which implies the existence of noncomputable sets, so any proof of
the completeness theorem will necessarily employ a nonconstructive (in the
sense of noncomputable) set existence principle.
Finally, it allows us to comprehend within a common framework the various
foundations for mathematics that can be faithfully formalised by subsystems of
second order arithmetic as being differentiated (in terms of their mathematical
consequences, rather than their philosophical justifications) by their
commitment to set existence principles of differing strengths.
\citet{Simpson2010} proposes that each of the Big Five corresponds to a
particular foundational program:
$\rca_0$ to computable mathematics;
$\wkl_0$ to a partial realisation of Hilbert's program, as in
\citet{Simpson1988};
$\aca_0$ to the predicative theory proposed by \citet{Weyl1918};
$\atr_0$ to the predicative reductionist approach pioneered by
\citet{Feferman1964};
and $\pica_0$ to the program of impredicative analysis found in
\citet{BucFefPohSie1981}.%
\footnote{%
Further references for these connections can be found in \citet{Simpson2010},
while a more thoroughgoing analysis of them appears in \citet{DeaWal2016}.
}

From the usual statement of the standard view, one might easily conclude that no
ordinary mathematical theorems have been found that are not equivalent to
particular set existence principles, but this is not the case.
A formal version of Hilbert's basis theorem was shown by \citet{Simpson1988a} to
be equivalent to the wellordering of a recursive presentation of the ordinal
$\omega^\omega$, while \citet{RatWei1993} showed that Kruskal's theorem is
equivalent to the wellordering of a recursive presentation of
$\vartheta\Omega^\omega$, the small Veblen ordinal. These wellordering
statements are $\Pi^1_1$, and prima facie are not set existence principles. If
anything, they are set nonexistence principles, stating that there exist no
infinite sets witnessing the illfoundedness of the recursive linear orders
representing the ordinals $\omega^\omega$ and $\vartheta\Omega^\omega$
respectively.
Another class of examples is provided by the fragments of the first order
induction scheme given by the $\Sigma^0_n$ induction and bounding schemes.
\citet{Hir1987} showed that Ramsey's theorem for singletons,
$\rt^1_{<\infty}$, is equivalent over $\rca_0$ to the $\Sigma^0_2$ bounding
scheme $\mathsf{B}\Sigma^0_2$.
Equivalences to stronger and weaker fragments of first order induction are
also proved in \citet{SimSmi1986} and \citet*{FriSimYu1993}.
These cases demonstrate that the standard view should not be interpreted as the
claim that all equivalences proved in reverse mathematics concern set existence
principles. Rather, in cases where a reversal from theorem $\tau$ to a subsystem
$S$ of second order arithmetic show that a set existence principle is required
to prove $\tau$, the reversal demonstrates which set existence principle is
required.

Since the base theory $\rca_0$ is both expressively impoverished and
proof-theoretically weak, the coding involved in representing ordinary
mathematical objects (such as real numbers, complete separable metric spaces,
Borel and analytic sets, and so on) in that theory is substantial. For reversals
to have the significance ascribed to them by the standard view requires that the
process of formalisation is faithful, in the sense that the formal statements in
the language of second order arithmetic formally capture the mathematical
content of statements of ordinary mathematics. In what follows, we shall take
this faithfulness for granted, at least for the specific mathematical theorems
discussed. Nevertheless, there are cases where we might doubt this faithfulness,
or at least worry about the commitments that attend certain types of coding, as
in the investigations of \citet{Kohlenbach2002} and \citet{Hunter2008}.

Despite its appeal, the standard view as it has been advanced in the literature
to date has a major weakness, namely that its central concept of a set existence
principle is left unanalysed, and thus the precise content of the view is
unclear. The goal of this article is therefore to provide such an analysis.
Our starting point is to note that accounts of the significance of reversals may
offer at least two kinds of explanation. The first is the significance of
reversals as a general matter: what do the equivalences proved in reverse
mathematics between theorems of ordinary mathematics and canonical subsystems of
second order arithmetic tell us? Let us call such explanations ``global''. The
second is the significance of reversals to a particular system, such as the
significance of reversals to $\wkl_0$ discussed above. Let us call explanations
of this sort ``local''. Depending on the account one offers, local explanations
may follow from global explanations, in part or in whole, or they may not. The
standard view is, prima facie, committed to a global explanation of the
significance of reversals, namely that they allow us to calibrate the strength
of set existence principles necessary to prove theorems of ordinary mathematics.
However, without a clear understanding of what set existence principles are, it
is unclear what the relationship between local and global explanations is under
the standard view. For example, it is commonly held that reversals to $\wkl_0$
demonstrate that proofs of the given theorem must involve some form of
compactness argument. In itself this is a local explanation, but without a
fuller account of what set existence principles are, it is left open whether it
follows from the global explanation of the significance of reversals in terms of
set existence principles, or whether it is genuinely local in character. As we
shall see, the answer to this question is highly sensitive to the particular
account of set existence principles one adopts.

\section{Set existence as comprehension}
\label{sec:sec_view}

One initially attractive way of cashing out the notion of a set existence
principle is taking it to be identical to the concept of a comprehension scheme.
I call this view ``set existence as comprehension'' or SEC.
A comprehension scheme is a schematic principle, each instance of which asserts
the existence of the (uniquely determined, by extensionality) set of all and
only those objects that satisfy a given predicate. In the case of second order
arithmetic, the objects are natural numbers, and the predicates concerned are
formulas of the language $\lt$ of second order arithmetic. Typically, a
comprehension scheme collectively asserts the existence of the sets defined by a
particular formula class, as follows.
Let $\Gamma$ be a set of formulas in the language $\lt$ of second order
arithmetic.
The \emph{$\Gamma$-comprehension scheme} consists of the universal closures of
all formulas of the form
\begin{equation}
    \exists{X} \forall{n} (n \in X \leftrightarrow \varphi(n))
    \label{eqn:gamma_ca}
    \tag{$\Gamma\text{-}\mathsf{CA}$}
\end{equation}
where $\varphi$ belongs to $\Gamma$ and $X$ is not free in $\varphi$.
If $\Gamma$ consists of all $\lt$-formulas, we obtain the full comprehension
scheme $\Pi^1_\infty\text{-}\mathsf{CA}$ of second order arithmetic, the
distinguishing axiom of full second order arithmetic $\z_2$. By restricting
$\Gamma$ to different formula classes, we obtain different comprehension
schemes. For example, if $\Gamma$ consists of formulas with only number
quantifiers, we obtain the arithmetical comprehension scheme, the distinguishing
axiom of the subsystem $\aca_0$. By restricting to formulas of the form
$\exists{k}\theta(k)$ where $\theta$ is quantifier-free ($\Sigma^0_1$ formulas),
we obtain the $\Sigma^0_1$-comprehension scheme; this is in fact equivalent to
the arithmetical comprehension scheme. By restricting to formulas of the form
$\forall{X}\theta(X)$ where $\theta$ is arithmetical, we obtain the $\Pi^1_1$-%
comprehension scheme, the distinguishing axiom of the subsystem $\pica_0$.
Some comprehension schemes do not follow this template to the letter, most
notably the recursive comprehension scheme (\ref{eqn:delta_0_1_ca}). This is a
subscheme of $\Sigma^0_1$-comprehension incorporating the further restriction
that each admissible formula be not merely $\Sigma^0_1$, but provably equivalent
to a $\Pi^0_1$ formula (i.e. of the form $\forall{k}\theta(k)$ where $\theta$ is
quantifier-free).

The idea that any given formal property (i.e. one defined by a formula of a
formal language properly applied to some domain) has an extension is a highly
credible basic principle, so long as appropriate precautions are taken to avoid
pathological instances. Second order arithmetic is a fragment of simple type
theory, which guards against Russell-style paradoxes, as formulas like
$X \not\in X$ are not well-formed and thus cannot appear in instances of
comprehension.
Moreover, comprehension schemes fall into straightforward hierarchies, with
increasingly strong comprehension schemes permitting the use of syntactically
broader classes of definitions. This harmonises with the fact that some theorems
are true even of the computable sets, while others entail the existence of
witnesses that are computability-theoretically highly complex, and strong axioms
are therefore needed in order to prove them. Such gradations can also be seen as
hierarchies of acceptability: if one denies that noncomputable sets exist then
recursive comprehension forms a natural stopping point; if one repudiates
impredicativity then arithmetical comprehension could be a good principle to
adopt; and so on.

Of the Big Five subsystems of second order arithmetic which are of primary
importance to reverse mathematics, only three are characterised by comprehension
schemes: $\rca_0$, $\aca_0$ and $\pica_0$. The intermediate systems $\wkl_0$ and
$\atr_0$ are obtained by adding further axioms (weak König's lemma in the former
case, arithmetical transfinite recursion in the latter) to a comprehension
scheme. Nevertheless, one might think that while weak König's lemma and
arithmetical transfinite recursion are not formulated as comprehension schemes,
they are nonetheless equivalent to comprehension schemes. The following result
of Dean and Walsh shows that this is not the case for weak König's lemma.%
\footnote{%
\label{fn:dw_proof}%
Dean and Walsh's proof of this theorem appeals to \citet*{SimTanYam2002}'s
result that $\wkl_0$ is conservative over $\rca_0$ for sentences of the form
$\forall{X}\exists{!Y}\varphi(X,Y)$, where $\varphi$ is arithmetical.
The theorem also follows from the non-existence of minimal $\omega$-models of
$\wkl_0$ and, as in the proof I give here, from basis theorems for $\Pi^0_1$
classes (\citet*{DiaDzhSoa2010} is a good reference for such basis theorems).
My thanks to the two anonymous referees for their suggestions towards a more
optimal proof.
}

\begin{thm}[Dean and Walsh, private communication]
    \label{thm:wkl_not_comp}
    No subset of the arithmetical comprehension scheme is equivalent over
    $\rca_0$ to weak König's lemma.
\end{thm}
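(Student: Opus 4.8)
The plan is to show that if some subset $\Gamma$ of the arithmetical comprehension scheme were equivalent over $\rca_0$ to weak König's lemma, we would obtain a contradiction. The key observation is that $\wkl_0$ does not imply the existence of any noncomputable set: there exist $\omega$-models of $\wkl_0$ whose second-order parts are ``low'' enough that no noncomputable set is forced into existence by weak König's lemma alone. I would make this precise using a basis theorem for $\Pi^0_1$ classes, which the footnote signals as the intended route.

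First I would assume for contradiction that $\Gamma \subseteq$ (arithmetical comprehension scheme) and that $\rca_0 \vdash (\bigwedge \Gamma \leftrightarrow \wkl)$. The strategy is to build a single $\omega$-model $\mathcal{M}$ that satisfies $\rca_0$ together with all of $\Gamma$, but fails to satisfy weak König's lemma, contradicting the assumed equivalence. To do this, I would exploit the fact that each instance of $\Gamma$ is an instance of arithmetical comprehension, and arithmetical comprehension over $\rca_0$ is equivalent to $\aca_0$ iff the \emph{full} scheme is present; a proper \emph{subset} need not generate $\aca_0$. Concretely, I would choose a countable collection $\mathcal{S} \subseteq \PowN$ closed under Turing reducibility and join, containing a witness to the failure of weak König's lemma (for instance, an infinite binary tree with no infinite path computable from the sets in $\mathcal{S}$), and I would verify that $\mathcal{S}$ can be arranged to satisfy every instance in $\Gamma$ while the relevant tree has no path in $\mathcal{S}$.

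The cleaner route, which I would actually pursue, inverts the model-theoretic construction using a basis theorem. By the low basis theorem (or any basis theorem placing an infinite path of each nonempty $\Pi^0_1$ class below a suitably weak set), one can build an $\omega$-model of $\wkl_0$ all of whose members are, say, of low Turing degree, and in particular such that the model contains no set computing $\emptyset'$. Since $\emptyset'$ (the Turing jump) is exactly what an instance of arithmetical comprehension defining the halting set would produce, I would argue that the specific arithmetical instance computing $\emptyset'$ cannot belong to $\Gamma$ — for if it did, $\wkl$ would imply the existence of $\emptyset'$ in every $\omega$-model, contradicting the low basis theorem. More generally, I would show that any proper analysis of which arithmetical instances $\wkl$ can prove reveals that $\wkl_0$ proves only those instances of arithmetical comprehension already provable in $\rca_0$, whereas the full arithmetical comprehension scheme is strictly stronger; hence no subset $\Gamma$ lying strictly between these can be equivalent to $\wkl$.

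The main obstacle, and the step requiring the most care, is the direction showing that $\wkl$ fails to prove the critical instances of $\Gamma$: I must ensure the basis-theorem model genuinely witnesses that some arithmetical comprehension instance in $\Gamma$ fails, rather than merely that the \emph{full} scheme fails. This is where the conservativity result cited in the footnote — that $\wkl_0$ is $\Pi^1_2$-conservative over $\rca_0$ (in the strong form for $\forall X \exists! Y\, \varphi$ sentences with $\varphi$ arithmetical) — does the decisive work, since each instance of $\Gamma\text{-}\mathsf{CA}$ is precisely a sentence asserting the unique existence of a set satisfying an arithmetical condition. Thus any instance of arithmetical comprehension that $\wkl_0$ proves is already a theorem of $\rca_0$; but if all of $\Gamma$ is provable in $\rca_0$, then $\wkl$ itself would be provable in $\rca_0$, which is false since $\rca_0$ has $\omega$-models (e.g.\ the recursive sets $\rec$) containing infinite binary trees with no recursive path. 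This contradiction completes the argument.
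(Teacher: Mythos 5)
Your decisive argument (the final paragraph) is correct, but it is not the paper's proof: it is essentially Dean and Walsh's \emph{original} proof, which the paper's footnote describes and then deliberately replaces. You argue that every instance of arithmetical comprehension is ($\rca_0$-provably, after the routine step of coding number parameters and multiple set parameters into a single set parameter) a sentence of the form $\forall{X}\exists{!Y}\varphi(X,Y)$ with $\varphi$ arithmetical, so by the conservation theorem of \citet*{SimTanYam2002} any such instance provable in $\wkl_0$ is already provable in $\rca_0$; hence if some subset $\Gamma$ of the scheme were equivalent to weak K\"onig's lemma over $\rca_0$, then $\wkl$ would be a theorem of $\rca_0$, contradicting the fact that $\rec$ is an $\omega$-model of $\rca_0$ containing an infinite recursive binary tree with no recursive path. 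The paper instead argues model-theoretically: assuming the equivalence (reduced without loss of generality to a single instance $\forall{X}\,C_\varphi(X)$), the computable falsity of $\wkl$ yields a computable $X$ whose comprehension witness $Y = \{ n : \varphi(n,X) \}$ is noncomputable, and cone avoidance for $\Pi^0_1$ classes then produces a countable $\omega$-model of $\wkl_0$ containing $X$ but omitting $Y$, which is impossible since that model must satisfy $C_\varphi(X)$. The trade-off: your route is shorter but imports a heavy proof-theoretic conservation theorem; the paper's route is self-contained at the level of $\omega$-models and uses only classical computability theory (basis theorems for $\Pi^0_1$ classes), which is why the author presents it as the more optimal proof.

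Two cautions. First, your detour through the low basis theorem does less than you claim: an $\omega$-model of $\wkl_0$ consisting of low sets rules out only those comprehension instances whose witnesses compute $\emptyset'$, whereas an arithmetical instance can define a low noncomputable set (for example a low noncomputable c.e.\ set), so lowness cannot establish your ``more general'' claim that $\wkl_0$ proves only the $\rca_0$-provable instances; what that model-theoretic strategy actually requires is cone avoidance, exactly as in the paper, and your proof is rescued only because the conservation theorem does this work instead. Second, do not describe the Simpson--Tanaka--Yamazaki result as ``$\Pi^1_2$-conservativity'': $\wkl$ is itself a $\Pi^1_2$ sentence unprovable in $\rca_0$, so $\wkl_0$ is certainly not $\Pi^1_2$-conservative over $\rca_0$. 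The uniqueness requirement in $\forall{X}\exists{!Y}\varphi(X,Y)$ is what makes the conservation theorem true, and it is also precisely why the theorem applies to comprehension instances; your argument uses it correctly, but your statement of the theorem should be repaired to match.
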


\begin{proof}
    Assume for a contradiction that there is a set of arithmetical formulas
    $\Psi$ such that $\rca_0$ proves that $\Psi\text{-}\mathsf{CA}$ is
    equivalent to weak König's lemma.
    Without loss of generality, we may assume that there is a single
    arithmetical formula $\varphi(n, X)$ with only the displayed free variables
    such that
    \begin{equation*}
        \rca_0 \vdash
            \mathrm{WKL} \leftrightarrow \forall{X} C_\varphi(X),
    \end{equation*}
    where $C_\varphi(X)$ is the instance of arithmetical comprehension
    asserting that the set $\Set{ n \in \Nat | \varphi(n, X) }$ exists.

    Let $M$ be a countable $\omega$-model of $\wkl_0$, and thus of
    $\forall{X} C_\varphi(X)$. $\mathrm{WKL}$ is computably false, so there
    exist $X, Y \in M$ with $Y = \Set{ n \in \omega | \varphi(n, X) }$ and
    $X \Tlneq Y$.
    By cone avoidance for $\Pi^0_1$ classes, there exists a countable
    $\omega$-model $M'$ of $\wkl_0$ such that $X \in M'$ but $Y \not\in M'$.
    Since  $M'$ must also be a model of $C_\varphi(X)$, there exists $Z \in M'$
    such that $n \in Z \leftrightarrow \varphi(n, X)$ for all $n \in \omega$.
    But then by extensionality $Z = Y$, contradicting the fact that
    $Y \not\in M'$.
\end{proof}

\citet[pp.~29--30]{DeaWal2016} argue that weak König's lemma is therefore a
counterexample to SEC. Crucially, $\wkl_0$ is not merely a subsystem of second
order arithmetic that is not equivalent to a comprehension scheme: it is a
\emph{mathematically natural} one, since weak König's lemma is equivalent over
$\rca_0$ to the Heine--Borel covering lemma, Brouwer's fixed point theorem, the
separable Hahn--Banach theorem, and many other theorems of analysis and algebra.

\citet[p.~119]{Simpson2010} defines a subsystem of second order arithmetic as
being mathematically natural just in case it is equivalent over a weak base
theory to one or more ``core'' mathematical theorems. $\wkl_0$, $\aca_0$,
$\atr_0$ and $\pica_0$ are all mathematically natural systems, since each one is
equivalent over $\rca_0$ to many theorems from different areas of ordinary
mathematics.
This is a notion that seems to admit of degrees: some systems may, in virtue of
being equivalent to many core mathematical theorems, be more mathematically
natural than those which are only equivalent to a few such theorems. When a
claim of the form ``$S$ is a mathematically natural system'' is used in an
unqualified way in the rest of this article, it should be taken to mean that
$S$ meets the minimum requirement of being equivalent to at least one core
mathematical theorem.
The notion of mathematical naturalness appears to give us a partial answer to
the question of the significance of reversals: by proving an equivalence between
a theorem of ordinary mathematics $\tau$ and a subsystem of second order
arithmetic $S$, we thereby demonstrate that $S$ is a mathematically natural
system. However, this still leaves us in the dark about the significance of the
reversal for the theorem $\tau$: what important property of this theorem of
ordinary mathematics do we come to know when we prove its equivalence over a
weak base theory to $S$, that we did not know before? It is this question that
the standard view, and thus the SEC account as an explication of the standard
view's central theoretical notion, attempts to answer.

Dean and Walsh's argument that SEC fails runs as follows: since weak König's
lemma is neither a comprehension scheme, nor equivalent to one, it cannot be a
set existence principle (as by SEC, set existence principles are just
comprehension schemes). So the significance of a reversal from a theorem $\tau$
to weak König's lemma cannot lie in the comprehension scheme that is both
necessary and sufficient to prove it, since there is no such scheme. Either the
significance of reverse mathematics does not lie in the set existence principles
which theorems reverse to, or the set existence as comprehension view is false.
Proponents of the standard view are thus on shaky ground. They must adopt a more
sophisticated way of spelling out their core contention, or abandon the idea
that the significance of reversals lies in set  existence principles. We can
also understand Dean and Walsh's point in terms of the distinction between local
and global explanations: SEC fails to offer a local explanation of the
significance of reversals to weak König's lemma, since it is not a
comprehension scheme, despite the fact that reversals to $\wkl_0$ are clearly
significant. The global explanation of the significance of reversals offered by
the SEC account is therefore inadequate.

One response to Dean and Walsh's argument is to endorse a more expansive
conception of set existence principles, based on the fact that Weak König's
lemma and arithmetical transfinite recursion are equivalent to schematic principles of a different sort, namely \emph{separation schemes}.
The separation scheme for a class of formulas $\Gamma$ holds that if two
formulas $\varphi, \psi \in \Gamma$ are provably disjoint, then there exists a
set whose members include every $n \in \Nat$ such that $\varphi(n)$, and exclude
every $n \in \Nat$ such that $\psi(n)$. Weak König's lemma is equivalent over
$\rca_0$ to $\Sigma^0_1$-separation, while arithmetical transfinite recursion is
equivalent over $\rca_0$ to $\Sigma^1_1$-separation.

\begin{dfn}[separation scheme]
    Let $\Gamma$ be a class of formulas, possibly containing free variables.
    The $\Gamma$-separation scheme, $\Gamma\text{-}\mathsf{SEP}$,
    consists of the universal closures of all formulas of the form
    \begin{equation*}
        \forall{n}(\neg(\varphi(n) \wedge \psi(n)))
        \rightarrow
        \exists{X}\forall{n} (
            (\varphi(n) \rightarrow n \in X)
            \wedge
            (\psi(n) \rightarrow n \not\in X)
        ),
        \tag{$\Gamma\text{-}\mathsf{SEP}$}
    \end{equation*}
    where $\varphi, \psi \in \Gamma$ with $X$ not free.
\end{dfn}

One response to Dean and Walsh's argument is to endorse the following more
expansive conception of set existence principles: both comprehension schemes and
separation schemes are set existence principles.

In line with our existing terminology, we call the proposal that both
comprehension schemes and separation schemes are set existence principles SECS.
This solves the immediate problem with the SEC view, since each of the Big Five
are equivalent over $\rca_0$ to either a comprehension scheme or a separation
scheme. But although SECS accommodates both weak König's lemma and arithmetical
transfinite recursion, and thus evades the counterexamples that sink SEC, it
does so at the price of a seemingly ad hoc modification to the view.

\section{Conceptual constraints}
\label{sec:constraints}

The arguments levelled against the SEC account and its variants tacitly appeal
to different constraints which the concept of a set existence principle should
satisfy, if it is to play a role in explaining the significance of reversals. I
shall now attempt to make these constraints explicit, by presenting three
conditions which any satisfactory account of the concept of a set existence
principle should meet, together with some reasons to believe that these
conditions are plausible. I shall then show how the SECS account meets two of
the stated conditions, but fails to satisfy a third.
In the following we take $\mathcal{A}$ to be some account of the concept of a
set existence principle (e.g. SEC, SECS) with an associated extension
$\mathcal{S(A)}$, consisting of the set of subsystems of second order arithmetic
which, according to the account $\mathcal{A}$, express set existence principles.

\begin{enumerate}
    \item[(1)] \textsc{Nontriviality.}
        \label{se_nontriviality}
        \emph{Not all subsystems of second order arithmetic are classified as
        set existence principles by $\mathcal{A}$, i.e. at least one subsystem
        is not a member of $\mathcal{S(A)}$.}
    \item[(2)] \textsc{Comprehensiveness.}
        \label{se_comprehensiveness}
        \emph{If $S$ is a mathematically natural subsystem of second order
        arithmetic, and $S$ actually expresses a set existence principle,
        then $\mathcal{A}$ classifies $S$ as expressing a set existence
        principle: $S \in \mathcal{S(A)}$.}
    \item[(3)] \textsc{Unity.}
        \label{se_unity}
        \emph{The subsystems of second order arithmetic which $\mathcal{A}$
        classifies as expressing set existence principles, i.e. the members of
        $\mathcal{S(A)}$, are conceptually unified.}
\end{enumerate}

Consider some account of set existence principles $\mathcal{A}$. Such an account
should lend substance to the claim that the significance of reversals lies in
the set existence principles necessary to prove theorems of ordinary
mathematics, by providing some degree of analysis of the concept of a set
existence principle, and a way to determine whether a subsystem of second order
arithmetic expresses a set existence principle (for example, the SEC account
does this by stating that set existence principles are just comprehension
schemes).
If $\mathcal{A}$ does not satisfy the nontriviality condition
(\ref{se_nontriviality}) then it cannot do this. There are many statements of
second order arithmetic, such as purely arithmetical ones containing no set
quantifiers, that prima facie are not set existence principles. Violating the
nontriviality condition therefore entails failing to provide a theory that is
truly an account of set existence principles at all.

If $\mathcal{A}$ does not meet the comprehensiveness condition
(\ref{se_comprehensiveness}), then it not only fails to include prima facie set
existence principles, but ones that are equivalent to core mathematical
theorems. $\mathcal{A}$ would therefore fail to provide a satisfactory
reconstruction of the concept of a set existence principle in reverse
mathematics, and the reconstruction offered would be unable to play its intended
role in the standard view.
The antecedent of the condition is relatively strong, since it requires that a
system both express a set existence principle and be mathematically natural. As
noted in §\ref{sec:introduction}, some core mathematical theorems reverse to
principles that are prima facie not set existence principles, such as Hilbert's
basis theorem which is equivalent over $\rca_0$ to $\mathrm{WO}(\omega^\omega)$.
It is thus not to $\mathcal{A}$'s detriment---and, indeed, may be to its credit%
---if it does not classify such systems as set existence principles.
On the other hand, there are also systems that prima facie express set existence
principles, but that are not mathematically natural, or at least are not known
to be. It is reasonable, but not required, for $\mathcal{A}$ to include such
systems. The $\Delta^1_1$-comprehension scheme is a set existence principle
according to the SEC account, but it has not been proved equivalent to any
theorem of ordinary mathematics. Various forms of the axiom of choice, such as
the systems $\Sigma^1_1\text{-}\mathsf{AC}$ and $\Sigma^1_1\text{-}\mathsf{DC}$,
also appear to be set existence principles that are not mathematically natural,
but in these cases the SEC account does not classify them as set existence
principles.%
\footnote{%
See definition VII.6.1 of \citet[p.~294]{Simpson2009} for details of these
systems.
}
The comprehensiveness condition (\ref{se_comprehensiveness}) permits such cases
because in order for an account of the concept of a set existence principle to
elucidate the standard view of reverse mathematics, it does not need to say
anything about the status of systems which are not equivalent to any theorem of
ordinary mathematics.
With these points in mind, we can see clearly why the fact that SEC does not
include weak König's lemma is so problematic. Resisting its mathematical
naturalness is difficult, since it is equivalent to dozens of core mathematical
theorems in analysis, algebra, logic, and combinatorics. Moreover, it is prima
facie a set existence principle, albeit in a somewhat different sense to
comprehension schemes. The standard view that reversals demonstrate the set
existence axioms necessary to prove theorems of ordinary mathematics would not be
the standard view in the field if it did not include $\wkl_0$.

Any failure of $\mathcal{A}$ to meet the unity condition (\ref{se_unity}) has a
somewhat different character. The standard view is an attempt to provide a general
account of the significance of reversals, one that does not make overt reference
to particular systems. Such generality requires that the systems which
$\mathcal{A}$ countenances as set existence principles have some features in
common. For example, while recursive comprehension, arithmetical comprehension
and $\Pi^1_1$-comprehension are all different, the SEC account still satisfies
the unity condition precisely because they are all comprehension schemes, and it
can offer a theory under which all comprehension schemes can legitimately be
considered to be set existence principles. If $\mathcal{A}$ does not satisfy the
unity condition then it cannot be considered as offering a satisfactory theory
of set existence principles, even if it is extensionally adequate in the sense
of satisfying conditions (\ref{se_nontriviality}) and
(\ref{se_comprehensiveness}). If no account meeting this condition can be found
then we are reduced to merely offering local explanations of the significance of
reversals to particular systems, rather than a global explanation of the
significance of reverse mathematical results. Moreover, an inability to provide
a unified account of the concept of a set existence principle would cast doubt
on whether there is indeed a unified concept that can play the explanatory role
the standard view requires of it.

Unity, however, comes in degrees, and so accounts of the concept of a set
existence principle can satisfy it in a stronger or weaker way. When there are
strong connections between the different systems considered to be set existence
principles, then the account under which those systems are considered to be set
existence principles can be said to strongly satisfy the unity condition. In
such cases it seems reasonable to expect that the significance of reversals to a
particular system $S$ will, in large part, be given in terms of the account of
set existence principles, rather than in terms of specific properties of $S$
that are at substantial variance to other set existence principles.
The SEC account exhibits this property: different comprehension schemes are
learly the same type of principle, and can be obtained by simple syntactic
restrictions on a stronger principle, namely the full comprehension scheme.
evertheless, requiring that any theory of set existence principles satisfies
the unity condition as strongly as the SEC account is an onerous requirement
that may well be impossible to meet in a theory that also satisfies the
comprehensiveness condition. Allowing for theories to satisfy the unity
requirement only weakly, and have different set existence principles bear a mere
family resemblance to one another, rather than be strictly of the same type of
axiom in some strong syntactic sense, seems a reasonable relaxation of the
ondition.

Since the comprehensiveness condition (\ref{se_comprehensiveness}) is a
conditional and not a biconditional, it does not require that every subsystem
that expresses a set existence principle according to $\mathcal{A}$ also
expresses set existence principle by the lights of our naïve, informal, or
intuitive understanding of the concept. This reflects the kind of counterexample
encountered in \S\ref{sec:sec_view} and \S\ref{sec:secs_view}, namely
$\mathrm{WKL}$ and $\mathrm{WWKL}$. These are sentences that prima facie express
set existence principles, but are not classified as such, by the SEC and SECS
accounts respectively.
Contrastingly, we have few examples of sentences that are classified as set
existence principles by the accounts currently on the table, and yet prima facie
do not express set existence principles.%
\footnote{%
Induction axioms are one possible exception; see footnote \ref{fn:induction}
for details.
}
Moreover, such cases do not seem to undermine the coherence or usefulness of the
concept of a set existence principle in the way that counterexamples like
$\mathrm{WKL}$ and $\mathrm{WWKL}$ do, because they do not give us the same
sense that our theory of set existence principles is somehow inadequate to the
data. The nontriviality condition (\ref{se_nontriviality}) and the unity
condition (\ref{se_unity}) are therefore formulated so as to constrain how
extensive the class of subsystems classified as set existence principles by
$\mathcal{A}$ is, but the conditions as a whole do not require that
$\mathcal{A}$ provide a list of subsystems that is coextensive with our informal
understanding of the concept of a set existence principle.

\section{Set existence as comprehension and separation}
\label{sec:secs_view}

Admitting separation schemes as set existence principles is, prima facie, an ad
hoc modification of the SEC view that weakens one of the main strengths of the
SEC view, namely its strong satisfaction of the unity condition
(\ref{se_unity}). The primary point of difference between separation and
comprehension schemes is that as straightforward definability axioms,
comprehension schemes tell us which particular sets exist. Separation schemes,
on the other hand, do not always do so: an axiom asserting the mere existence of
a separating set may not pin down a particular set as the witness for this
assertion. A striking theorem in this vein is that the only sets which every
$\omega$-model of $\Sigma^0_1$-separation (i.e. $\wkl_0$) has in common are the
computable ones. This illustrates \citet[p.~235]{Friedman1975}'s point that
``Much more is needed to define explicitly a hard-to-define set of integers than
merely to prove their existence.''

To rebut the charge that SECS is ad hoc, and show that it does after all satisfy
the unity requirement, we must show that there is some degree of conceptual
commonality between comprehension schemes and separation schemes. Let us first
note that one important comprehension scheme is a separation scheme:
$\Delta^0_1$-comprehension is equivalent, over a weak base theory, to the
$\Pi^0_1$-separation scheme.
Moreover, following \citet{Lee2014}, we can treat the entire Big Five in a
unified way by understanding them as \emph{interpolation schemes}.

\begin{dfn}[interpolation scheme]
    Let $\Gamma$ and $\Delta$ be sets of $\lt$-formulas, possibly with free
    variables.
    The \emph{$\Gamma\text{-}\Delta$ interpolation scheme},
    $\Gamma\text{-}\Delta\text{-}\mathsf{INT}$,
    consists of the universal closures of all formulas of the form
    \begin{equation*}
        \begin{aligned}
            & \forall{m}(\varphi(m) \rightarrow \psi(m))
            \\ \rightarrow \;
            & \exists{X}\forall{m} (
                (\varphi(m) \rightarrow m \in X)
                \wedge
                (m \in X \rightarrow \psi(m))
            )
        \end{aligned}
        \tag{$\Gamma\text{-}\Delta\text{-}\mathsf{INT}$}
    \end{equation*}
    where $\varphi \in \Gamma$ and $\psi \in \Delta$, and $X$ is not free in
    either formula.
\end{dfn}

All of the Big Five are equivalent to interpolation schemes:
$\rca_0$ is equivalent to $\Pi^0_1\text{-}\Sigma^0_1\text{-}\mathsf{INT}$;
$\wkl_0$ to $\Sigma^0_1\text{-}\Pi^0_1\text{-}\mathsf{INT}$;
$\aca_0$ to $\Sigma^0_1\text{-}\Sigma^0_1\text{-}\mathsf{INT}$;
$\atr_0$ to $\Sigma^1_1\text{-}\Pi^1_1\text{-}\mathsf{INT}$;
and $\pica_0$ to $\Sigma^1_1\text{-}\Sigma^1_1\text{-}\mathsf{INT}$. This should
go at least some way towards ameliorating our worry that SECS fails to satisfy
the unity condition (\ref{se_unity}), since we can now see that both
comprehension schemes and separation schemes are actually interpolation schemes.

Mere syntactic unity should not by itself convince us of the conceptual unity of
comprehension schemes and separation schemes; after all, a sufficiently broad
syntactically specified class of sentences will eventually contain all
statements. The notion of an interpolation scheme is, however, relatively narrow
and it is not hard to see that it is a straightforward generalisation of the
concepts of separation and comprehension. The comprehension scheme for some
class of formulas $\Phi$ can be derived from the $\Phi\text{-}\Phi$
interpolation scheme, since for any instance of comprehension we can use the
given formula in both places in the interpolation scheme and thus derive the
comprehension instance. Separation schemes, on the other hand, arise when given
some formula class $\Delta$, the formula class $\Gamma$ consists of the
negations of the formulas in $\Delta$, such as when $\Delta = \Sigma^0_1$ and
$\Gamma = \Pi^0_1$.%
\footnote{%
As noted in §\ref{sec:sec_view}, $\rca_0$ deviates somewhat from the standard
template for comprehension schemes, and for this reason is not equivalent to an
interpolation scheme of the type $\Phi\text{-}\Phi\text{-}\mathsf{INT}$ where
both sets of formulas are the same.
By analogy with the case of $\rca_0$, one might expect the $\Delta^1_1$-%
comprehension scheme to be equivalent to
$\Pi^1_1\text{-}\Sigma^1_1\text{-}\mathsf{INT}$,
i.e. the $\Pi^1_1$-separation scheme. However, this is not the case, as
\citet{Montalban2008} proved. Since $\dica_0$ is not known to be a
mathematically natural subsystem of $\z_2$, this fact does not by itself seem to
pose a particular problem for the SECS view.
}

However, even if we grant that SECS satisfies the unity condition
(\ref{se_unity}), it still fails to offer a satisfactory theory of set existence
principles, since there is a mathematically natural counterexample which shows
that it does not satisfy the comprehensiveness condition
(\ref{se_comprehensiveness}), namely the axiom known as \emph{weak weak König's
lemma}.
This principle was introduced in \citet{Yu1987}, and as the name suggests, it is
a further weakening of weak König's lemma, obtained by restricting weak König's
lemma to trees with positive measure.

\begin{dfn}[weak weak König's lemma]
    \emph{Weak weak König's lemma} ($\mathrm{WWKL}$) is the statement that
    if $T$ is a subtree of $2^{<\Nat}$ with no infinite path, then
    \begin{equation*}
        \lim_{n \rightarrow \infty}
            \frac{\left|\Set{
                \sigma \in T | \mathrm{lh}(\sigma) = n
            }\right|}{2^n} = 0.
    \end{equation*}
    The system $\wwkl_0$ is given by adjoining $\mathrm{WWKL}$ to the axioms of
    $\rca_0$.
\end{dfn}

$\wwkl_0$ is strictly intermediate between $\rca_0$ and $\wkl_0$
\citep{YuSim1990}, and is equivalent over $\rca_0$ to a number of theorems from
measure theory, such as a formal version of the Vitali Covering Theorem
\citep*{BroGiuSim2002}; the countable additivity of the Lebesgue measure
\citep{YuSim1990}; and the monotone convergence theorem for the Lebesgue
measure on the closed unit interval. A survey of results in this area is given
in \citet[§X.1]{Simpson2009}. These equivalences show that $\mathrm{WWKL}$ is
mathematically natural in Simpson's sense. By the comprehensiveness condition
(\ref{se_comprehensiveness}) we should therefore expect a good account of set
existence principles to include it.

$\wwkl_0$ is not equivalent over $\rca_0$ to any subset of the arithmetical
comprehension scheme.%
\footnote{%
This follows, for example, from the conservativity theorem of
\citet*{SimTanYam2002} mentioned in footnote \ref{fn:dw_proof}.
}
\citet[§2, pp.~172--3]{YuSim1990} proved that not every model of $\wwkl_0$ is a
model of $\wkl_0$. Their proof involves the construction of what is known as a
\emph{random real model}, and the properties of this model show that $\wwkl_0$
is not equivalent over $\rca_0$ to any subscheme of $\Sigma^0_1$-separation.
Since $\wwkl_0$ is not a separation scheme, it is a mathematically natural
subsystem of $\z_2$ that SECS cannot accommodate. SECS therefore fails to
satisfy the comprehensiveness condition (\ref{se_comprehensiveness}).

\begin{thm}[\citealt{YuSim1990}]
    Weak weak König's lemma is not equivalent over $\rca_0$ to any subscheme of
    the $\Sigma^0_1$-separation scheme.
\end{thm}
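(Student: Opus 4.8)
The plan is to prove this by a model-theoretic separation argument, paralleling the structure of the proof of Theorem~\ref{thm:wkl_not_comp} but adapted from comprehension schemes to separation schemes. The key tool is the random real model constructed by \citet[§2, pp.~172--3]{YuSim1990}, which I shall assume as an available result: there is a countable $\omega$-model $M$ of $\wwkl_0$ that is not a model of $\wkl_0$.

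First I would argue by contradiction: suppose there is a class of $\Sigma^0_1$ formulas $\Psi$ such that $\rca_0$ proves $\mathrm{WWKL} \leftrightarrow \Psi\text{-}\mathsf{SEP}$. As in the earlier theorem, I would reduce to a single formula, noting that a separation instance is governed by a pair of $\Sigma^0_1$ formulas $\varphi(n,X), \psi(n,X)$; so without loss of generality there is a single instance of $\Sigma^0_1$-separation, say $S_{\varphi,\psi}(X)$, such that $\rca_0 \vdash \mathrm{WWKL} \leftrightarrow \forall{X}\, S_{\varphi,\psi}(X)$. The crucial point here is that the relevant class of separation instances has bounded syntactic complexity, so the full separation principle over the hypothesised subscheme collapses to (the universal closure of) finitely many, hence essentially one, instance.

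Next I would invoke the random real model $M$. Since $M \models \wwkl_0$ but $M \not\models \wkl_0$, and since $\wkl_0$ is equivalent over $\rca_0$ to the full $\Sigma^0_1$-separation scheme, there is some instance of $\Sigma^0_1$-separation that fails in $M$: there exist parameters in $M$ and disjoint $\Sigma^0_1$-definable sets for which no separating set lies in $M$. By hypothesis, however, $M \models \mathrm{WWKL}$ implies $M \models \forall{X}\, S_{\varphi,\psi}(X)$, so every instance in the subscheme $\Psi\text{-}\mathsf{SEP}$ is satisfied in $M$. The contradiction I am aiming for is that the failing separation instance witnessing $M \not\models \wkl_0$ must be one that $\Psi\text{-}\mathsf{SEP}$ does provide, forcing the separating set to exist in $M$ after all --- contradicting the failure of $\wkl_0$ in $M$.

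The main obstacle, and the step requiring the most care, is the gap between the generic failure of $\wkl_0$ in $M$ and the specific instance controlled by $\Psi$. Merely knowing $M \not\models \wkl_0$ tells us \emph{some} $\Sigma^0_1$-separation instance fails, but not that it is one lying within the particular subscheme $\Psi$. To close this gap I would use the known computability-theoretic content of the random real model, namely that the only sets common to all $\omega$-models of $\wkl_0$ are the computable ones, together with the fact that the random real model contains sets that a separating set would have to compute nontrivially. The cleanest route is to show that the reals added in the random real construction avoid computing the separating sets demanded by full $\Sigma^0_1$-separation, so that $\Psi\text{-}\mathsf{SEP}$ --- were it equivalent to $\mathrm{WWKL}$ --- would have to supply exactly these unavailable sets; this is where one must read off the precise effectivity properties established by Yu and Simpson rather than treat the model as a black box.
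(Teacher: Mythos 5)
Your proposal rightly centres on the Yu--Simpson random real model, which is consistent with the paper: the paper offers no self-contained proof here, but cites \citet{YuSim1990} and asserts that the properties of that model yield the theorem. However, the contradiction you aim for is unobtainable, and the problem is structural, not a detail to be filled in. Under your reductio hypothesis $\rca_0 \vdash \mathrm{WWKL} \leftrightarrow \Psi\text{-}\mathsf{SEP}$, the model $M$ satisfies \emph{every} instance of $\Psi\text{-}\mathsf{SEP}$, because $M \models \wwkl_0$. Hence the separation instance whose failure witnesses $M \not\models \wkl_0$ is automatically outside $\Psi$ and not a consequence of $\Psi\text{-}\mathsf{SEP}$ in $M$. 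Establishing what you aim for---that the failing instance ``must be one that $\Psi\text{-}\mathsf{SEP}$ does provide''---would amount to proving $\rca_0 + \Psi\text{-}\mathsf{SEP} \vdash \mathrm{WKL}$, a claim essentially as strong as the theorem itself; nothing in your sketch derives it, and it cannot follow from what you have, since the hypothesis only gives $\Psi\text{-}\mathsf{SEP} \vdash \mathrm{WWKL}$ and $M$ itself shows that $\mathrm{WWKL}$ does not deliver those separators. Your proposed patch---that the random reals avoid computing the separators demanded by \emph{full} $\Sigma^0_1$-separation---merely re-proves $M \not\models \wkl_0$ and says nothing about the unknown subscheme $\Psi$. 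The deeper point is that no contradiction can be located inside $M$ at all: satisfying $\Psi\text{-}\mathsf{SEP} + \neg\mathrm{WKL}$ is perfectly consistent behaviour for a proper subscheme. What must be refuted is the direction $\Psi\text{-}\mathsf{SEP} \Rightarrow \mathrm{WWKL}$, so the contradiction has to be exhibited in a model where $\mathrm{WWKL}$ \emph{fails}; your argument only ever exploits the other direction.

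The workable argument uses $M$ as a stepping stone to $\rec$, the $\omega$-model of the computable sets. Build $M$ from a Martin-L\"of random $r$ chosen, in addition, outside the null class of oracles computing a separator of some computably inseparable disjoint pair of lightface $\Sigma^0_1$ sets (each such separating class is negligible by a majority-vote measure argument in the style of Jockusch and Soare, so almost every $r$ works). Then $M \models \wwkl_0$, and $M$ has the key property that a disjoint $\Sigma^0_1$ pair with computable parameters has a separator in $M$ only if it has a \emph{computable} separator: trivial when the union of the pair is finite (the positive-measure case), and by the choice of $r$ otherwise. Since every instance of $\Psi\text{-}\mathsf{SEP}$ holds in $M$, this property transfers each of them down to $\rec$. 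But $\rec \not\models \mathrm{WWKL}$, as there are positive-measure $\Pi^0_1$ classes with no computable member, contradicting $\rca_0 + \Psi\text{-}\mathsf{SEP} \vdash \mathrm{WWKL}$. Two smaller points: your reduction to a single instance should be justified by compactness applied to the derivation of $\mathrm{WWKL}$ from $\Psi\text{-}\mathsf{SEP}$, together with the standard amalgamation of finitely many separation instances, not by ``bounded syntactic complexity''; and the fact you invoke about the sets common to all $\omega$-models of $\wkl_0$ being exactly the computable ones concerns $\wkl_0$, not the random real model, and does not close the gap.
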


A virtue that it would be reasonable to expect of any account of set existence
principles is the ability to incorporate the discovery of new subsystems of
second order arithmetic which turn out to be equivalent to theorems of ordinary
mathematics. Banking on SEC or its extensions amounts to a bet that all such new
systems will be comprehension schemes or separation schemes. The discovery of
$\mathrm{WWKL}$ and the role it plays in the reverse mathematics of measure
theory shows that such optimism is unfounded even for the mathematically natural
systems which are already known. In the next section I will advance an account
of set existence principles which does not suffer from this weakness.

\section{Closure conditions}
\label{sec:closure_view}

In a sense the term ``set existence principles'' is an unfortunate one, since
it implies that the relevant principles simply assert the existence of some
sets, independently of the other axioms of the theory. A better term, which more
accurately captures the nature of these axioms, is ``closure conditions''.
These are not bare or unconditional statements of set existence, but conditional
principles. In general they hold that given the existence of a set $X \in \PowN$
with certain properties, there exists another set $Y \in \PowN$ standing in some
relation to $X$.
Weak König's lemma, for example, asserts that $\PowN$ is closed under the
existence of paths through infinite subtrees of $2^{<\Nat}$. This is a
conditional set existence principle since it requires that there be such trees
in the first place. In the absence of other suitable existence axioms, weak
König's lemma alone would not allow us to prove the existence of any sets at
all, and is thus a set existence principle only relative to other axioms such
as recursive comprehension.

On the other hand, comprehension schemes appear at first blush to be set
existence principles tout court. Nevertheless, they too are better understood as
closure conditions, because the comprehension schemes used in reverse
mathematics all admit parameters. Comparing the standard formulation of
recursive comprehension (in which parameters are allowed) with the parameter-%
free version makes this clear.
The parameter-free recursive comprehension scheme asserts the existence of those
sets definable in a $\Delta^0_1$ way, without reference to any other sets. But
recursive comprehension with parameters instead asserts that $\PowN$ is closed
under relative recursiveness: if $X \subseteq \Nat$ exists, so does every
$Y \Tleq X$.
While comprehension schemes do have a different flavour to other closure
conditions, they can often be characterised in equivalent ways which more
closely hew to the model described above for weak König's lemma.
Arithmetical comprehension, for example, is equivalent over $\rca_0$ to König's
lemma, the statement that every finitely branching infinite subtree of
$\Nat^{<\Nat}$ has an infinite path through it.
$\pica_0$ is equivalent over $\rca_0$ to the statement that for every tree
$T \subseteq \Nat^{<\Nat}$, if $T$ has a path then it has a leftmost path.

With these points in mind, I propose that the best way to understand the concept
of a set existence principle in reverse mathematics is by means of the concept
of a closure condition on the powerset of the natural numbers. Reformulating the
standard view of reverse mathematics in these terms, we come to the view that
the significance of a provable equivalence between a theorem of ordinary
mathematics $\tau$ and a subsystem $S$ of second order arithmetic lies in
telling us what closure conditions $\PowN$ must satisfy in order for $\tau$
to be true. This is a bit of a mouthful, so we shall adopt the following slogan
as an abbreviation for the view: \emph{reversals track closure conditions}.

Views of this sort have been stated elsewhere, most notably by
\citet[p.~451]{Feferman1992} who identifies set existence principles with
closure conditions in his discussion of the mathematical existence principles
justified by empirical science.
Indeed, already in \citet[p.~8]{Feferman1964} we find that work in the spirit of
Weyl's predicative analysis ``isolates various closure conditions on a
collection of real numbers which are necessary to obtain [results such as the
Bolzano--Weierstraß and Heine--Borel theorems], in this case closure under
arithmetical definability.''
Similar positions have also been taken in the reverse mathematics literature,
for example by \citet*[p.~2]{DorDzhHir2015}, who write that each subsystem
studied in reverse mathematics ``corresponds to a natural closure point under
logical, and more specifically, computability-theoretic, operations'', and by
\citet*[p.~864]{ChoSlaYan2014}, who write that ``Ultimately, we are attempting
to understand the relationships between closure properties of $2^{\Nat}$''.
\citet{Shore2010} also states that the Big Five correspond to recursion-%
theoretic closure conditions: this is clearly something in the air. However,
none of these authors make precise what they mean by a closure condition, nor
draw out the consequences of this view, although \citet*{ChoSlaYan2014} consider
it to have consequences for the practice of reverse mathematics, for instance
in showing that $\omega$-models have a particular importance.

In order to clarify the content of the view that reversals track closure
conditions, let us distinguish two things: a closure condition in itself, and
the different axiomatizations of that closure condition. Closure conditions are
extensional: they are conditions on the powerset which assert that it is closed
in some way, for example under arithmetical definability. Axiomatisations of
closure conditions are intensional: one and the same closure condition will
admit of infinitely many different axiomatizations. For example, the Turing jump
operator gives rise to a closure condition, of which some of the better-known
axiomatizations are (modulo the base theory $\rca_0$): the arithmetical
comprehension scheme; König's lemma; and the Bolzano--Weierstraß theorem.

The upshot of this distinction is that by proving reversals we show that
different theorems of ordinary mathematics correspond to the same closure
conditions. The significance of reversals thus lies, at least to a substantial
extent, in placing these theorems in a hierarchy of well-understood closure
conditions of known strength. Note also that there is a duality here: an
equivalence between a theorem $\tau$ and a system $S$ tells us something about
$\tau$, namely its truth conditions in terms of what closure condition must hold
for it to be true, but it also tells us something about the closure condition
itself, namely how much of ordinary mathematics is true in $\PowN$ when that
closure condition holds.

This account incorporates a substantial relativism to the base theory, in at
least two distinct ways. The first is that, as noted above, some closure
conditions such as weak König's lemma are set existence principles only in a
conditional sense, and using them to prove the existence of sets relies on them
being used in conjunction with other axioms. The second is that the equivalences
which I take to show that different statements express the same closure
condition must be proved in some base theory. Typically, this will be the usual
base theory for reverse mathematics, $\rca_0$, but in some cases a stronger base
theory is required in order to prove that two statements are equivalent and thus
express the same closure conditions.

The view that reversals track closure conditions has some marked advantages over
the SEC account and its variants such as SECS. Most notably, it can accommodate
all of the counterexamples discussed so far. Weak König's lemma is a closure
condition, and thus we improve on the SEC account; but so is weak weak König's
lemma, and thus the new account also succeeds where the SECS account fails.
Other principles which have been studied in reverse mathematics---arithmetical
transfinite recursion, choice schemes, and many others---can all be understood
as expressing closure conditions on $\PowN$. Moreover, this account will also
accommodate any similar principle discovered to be equivalent to a theorem of
ordinary mathematics.
In order to determine whether or not the view goes beyond this apparent
improvement over the SEC and SECS accounts, and provides a satisfactory account
of the significance of reversals in general, we return to the three conditions
that I argued in §\ref{sec:constraints} any account of set existence principles
should satisfy:
nontriviality (\ref{se_nontriviality}),
comprehensiveness (\ref{se_comprehensiveness}),
and unity (\ref{se_unity}).

By analysing the notion of a set existence principle in terms of closure
conditions on the powerset of the natural numbers, the account clearly offers a
unified picture of what set existence principles are. However, the notion of a
closure condition is a very general one. At first glance, the view seems
committed to the idea that every $\Pi^1_2$ sentence expresses a closure
condition. By way of contrast, the specificity of the concept of a comprehension
scheme means that the SEC account strongly satisfies the unity condition. But
this very feature undermines its suitability as an analysis of the concept of a
set existence principle, since it fails to be sufficiently comprehensive, as the
existence of striking counterexamples such as weak König's lemma illustrates.
We therefore must conclude that although the view that reversals track closure
conditions satisfies the unity condition, it only satisfies it in a relatively
weak sense. As such, it is reasonable to wonder to what degree the view can
offer a compelling explanation of the significance of reversals, since if it is
easy for an $\lt$-sentence to be considered as expressing a closure condition,
then it is unclear what light this can shed on the importance of particular
reversals.
There is, however, a clear sense in which all closure conditions are the same
kind of thing: if weak König's lemma and Ramsey's theorem for pairs are not in
exactly the same class of principles, they certainly bear a family resemblance
to one another. With this in mind I contend that weakly satisfying the unity
condition is sufficient to make the view that reversals track closure conditions
a viable account, and moreover that weakly satisfying this condition is all one
can expect of an account of set existence principles that accommodates not only
$\mathrm{WKL}$ and $\mathrm{WWKL}$, but also the constellation of combinatorial
and model-theoretic statements in the Reverse Mathematics Zoo.

The generality of the view also undermines, to some extent, its ability to
satisfy the nontriviality condition (\ref{se_nontriviality}).
Arithmetical statements cannot be considered as expressing closure conditions,
and thus according to the account they do not express set existence principles.
Neither do $\Pi^1_1$ statements, such as those expressing that a given recursive
ordinal $\alpha$ is wellordered. Nevertheless, it is hard to escape from the
conclusion that at least every $\Pi^1_2$ statement should be considered a
closure condition. After all, it is the very form of these statements---which
assert that for every set $X \subseteq \Nat$ of a certain sort, there exists a
set $Y \subseteq \Nat$ of a different sort---that brought us to consider the
view that reversals track closure conditions in the first place.

This point holds more generally: thus far we have only considered closure
conditions with $\Pi^1_2$ formulations, but not even all of the Big Five have
$\Pi^1_2$ definitions. In particular, $\pica_0$ is not equivalent over $\atr_0$
to any $\Pi^1_2$ statement, although it is straightforwardly expressed as a
$\Pi^1_3$ sentence. There are even theorems that exceed the strength of
$\Pi^1_1$-comprehension, such as ``Every countably based MF space which is
regular is homeomorphic to a complete separable metric space'', which is
equivalent to $\Pi^1_2$-comprehension \citep{MumSim2005}. Such theorems are not
expressible as $\Pi^1_3$ statements, so we must consider yet more complex
sentences as also expressing closure conditions if we are to bring them into the
account.

When we restrict our attention to the closure conditions expressed by $\Pi^1_2$
sentences, the conditions involved are arithmetical, and thus indicate that
$\PowN$ is closed under the existence of sets which are definable without
quantification over the totality of sets of natural numbers. In other words,
$\Pi^1_2$ closure conditions are predicative. On an intuitive level, we can
understand these predicative closure conditions as local conditions, in the
following sense.
Suppose we have a class of sets $\mathcal{C} \subseteq \PowN$ and a closure
condition which states that for every $X$ such that $\psi(X)$, there exists a
$Y$ such that $\varphi(X,Y)$, where $\psi$ and $\varphi$ are arithmetical.
If there is such an $X \in \mathcal{C}$ such that there is no corresponding
$Y \in \mathcal{C}$, then $\mathcal{C}$ does not satisfy the instance of the
closure condition for $X$, and must be expanded to a class $\mathcal{C}'
\supseteq \mathcal{C}$ such that $Y \in \mathcal{C}'$ for some $Y$ such that
$\varphi(X, Y)$. Satisfying the instance of the closure condition for $X$ thus
depends only on the existence of $Y$, and the arithmetical properties of $X$ and
$Y$.
For impredicative set existence principles such as $\Pi^1_1$ or $\Pi^1_2$
comprehension, the situation is more complex, since if a class of sets
$\mathcal{C}$ fails to satisfy such a principle, then the new sets that must be
added to $\mathcal{C}$ in order to satisfy it depend not just on the single set
$X$, but also on the entirety of elements of the expanded class $\mathcal{C}'
\supseteq \mathcal{C}$. Impredicative closure conditions are thus
global conditions rather than local ones.
The more general, impredicative notion of closure at work here is thus distinct
from, and stronger than, that which applies in the case where the conditions
are arithmetical. Nevertheless, it is still recognisably a notion of closure:
an impredicative closure condition requires that for every set $X$ there exists
a $Y$ bearing some relation to $X$, although that relation may be given in terms
of not just arithmetical properties of $X$ and $Y$, but properties of all sets
of natural numbers.

Taking these considerations to their natural conclusion, it seems that
we should grant that all $\Pi^1_n$ sentences, for $n \geq 2$, express closure
conditions (for brevity's sake we shall sometimes refer to such sentences as
$\Pi^1_{n \geq 2}$ sentences in the remainder of this article).
Sentences of lower complexity (for example, arithmetical sentences) can be
transformed into $\Pi^1_2$ sentences by the addition of dummy quantifiers. To
work around this difficulty, we impose the following constraint on the
$\Pi^1_{n \geq 2}$ sentences which we take to express closure conditions.
For all $n \geq 1$, we say that a sentence $\varphi$ is \emph{essentially
$\Pi^1_n$} (respectively, \emph{essentially $\Sigma^1_n$})
if, over an appropriate base theory $B$, it is equivalent to a sentence that is
$\Pi^1_n$ (respectively, $\Sigma^1_n$), and not equivalent to any $\Sigma^1_n$
sentence (respectively, $\Pi^1_n$), or to any sentence of lower complexity.
The base theory $B$ should not prove $\varphi$, but otherwise it should be as
strong as possible.%
\footnote{%
The term ``essentially $\Pi^1_n$ ($\Sigma^1_n$) formula'' is used in the
literature on subsystems of second order arithmetic to denote a different,
formally specified formula class
(e.g. \citet[definition VIII.6.1, p.~348]{Simpson2009}).
In this paper, however, the term ``essentially $\Pi^1_n$ ($\Sigma^1_n$)
sentence'' will only be used in the sense of the definition just given.
}
These requirements are intended to ensure that if a sentence $\varphi$ does not
express a closure condition, then a sufficiently strong base theory is available
to prove this fact.
For example, consider the following sentence $\psi$:
``$\mathrm{WKL} \wedge \text{$0'$ exists}$''. $\psi$ is $\Pi^1_2$, but not
essentially $\Pi^1_2$. While $\rca_0$ does not prove $\psi$, neither does
$\wkl_0$, since weak König's lemma does not imply the existence of the Turing
jump. However, since $\wkl_0$ proves the first conjunct, it proves that $\psi$
is equivalent to the $\Sigma^1_1$ sentence ``$0'$ exists''. $\psi$ is thus
not essentially $\Pi^1_2$, and does not express a closure condition. This seems
like the correct outcome, since $\psi$ only asserts that $0'$ exists, rather
than making the more general (and essentially $\Pi^1_2$) claim that for all $X$,
$X'$ exists.
With the notion of essentially $\Pi^1_n$ and $\Sigma^1_n$ sentences in hand,
we can give a more precise account of which sentences express closure
conditions, namely all and only the essentially $\Pi^1_{n \geq 2}$ sentences.

\citet[pp.~29--30]{DeaWal2016} express the worry that there might be no broader
notion of set existence principle that includes both comprehension schemes and
principles like weak König's lemma and arithmetical transfinite recursion,
beyond that of accepting all $\Pi^1_2$ sentences as expressing set existence
principles. As argued in §\ref{sec:secs_view}, there is a broader notion (the
SECS view) that includes weak König's lemma and arithmetical transfinite
recursion, but this broader notion remains vulnerable to mathematically natural
counterexamples like $\mathrm{WWKL}$. In endorsing the view that all and only
the essentially $\Pi^1_{n \geq 2}$ sentences express closure conditions, I am
biting this bullet and broadly concurring with Dean and Walsh that there is no
viable broader notion of set existence principle beyond that encompassing all
essentially $\Pi^1_2$ sentences (and worse, all essentially $\Pi^1_n$
sentences as well, for $n \geq 3$). Since the essentialness restriction rules
out all sentences in the class $\Sigma^1_1 \cup \Pi^1_1$, as well as many
others, the situation is not disastrous: measuring set existence is not ``just
the same as sorting out the very fine-grained equivalence classes of mutual
derivability'' \citep[p.~30]{DeaWal2016}, although it is not so far away from
this either.

The restriction to essentially $\Pi^1_{n\geq2}$ sentences also rules out
wellordering statements such as $\mathrm{WO}(\omega^\omega)$, which seems like
the correct outcome, but other cases are less clear-cut.
According to \citet[remark II.3.11, p.~71--2]{Simpson2009},
$\Sigma^0_n$-induction is a set existence principle, because it is equivalent to
bounded $\Sigma^0_n$-comprehension (i.e. comprehension for finite
$\Sigma^0_n$-definable sets). However, since $\Sigma^0_n$-induction can (for any
$n \in \omega$) be axiomatized by a $\Pi^1_1$ sentence, it does not express a
closure condition, and hence according to the present account it is not a set
existence principle. Moreover, we can also run Simpson's argument in reverse:
since bounded $\Sigma^0_n$-comprehension is equivalent to $\Sigma^0_n$-%
induction, it does not (contrary to appearances) express a set existence
principle after all. As induction axioms have a very different character to the
closure conditions we have been considering in the rest of this paper, it seems
reasonable to consider them as distinct kinds of axiom. Simpson appears to
classify induction axioms as set existence principles primarily to further
bolster his claim that reverse mathematics shows us what set existence
principles are necessary to proving theorems of ordinary mathematics. In light
of the necessity of axioms such as $\mathrm{WO}(\omega^\omega)$ to proving
results like Hilbert's basis theorem, it does not seem overly problematic to
take the line that reverse mathematics shows that not only set existence
principles, but also other kinds of axiom, are required in order to prove
theorems of ordinary mathematics. Amongst these we can include both wellordering
statements and induction axioms.%
\footnote{%
\label{fn:induction}
One lacuna is induction schemes where the formula class for which induction is
permitted is not arithmetical, such as $\Sigma^1_1$-induction. These appear to
be equivalent to essentially $\Pi^1_{n\geq2}$ sentences, and thus express
closure conditions. This is an unfortunate asymmetry with the arithmetical
induction scheme and its subschemes such as $\Sigma^0_2$-induction. This
asymmetry can be overcome by requiring that all base theories include full
induction, or by only considering $\omega$-models as \citet{Shore2010} does,
but these approaches incur other costs.
}

A different kind of difficulty is posed by parameter-free comprehension schemes.
These assert that a certain class of definable sets exist; for example, the
parameter-free $\Sigma^0_1$-comprehension scheme asserts that all sets exist
that are definable by a $\Sigma^0_1$ formula without free set variables. Because
of the existence of universal $\Sigma^0_1$ sets, the parameter-free
$\Sigma^0_1$-comprehension scheme can (in the presence of recursive
comprehension, with parameters) be axiomatized by a single $\Sigma^1_1$ formula.
More generally, parameter-free comprehension schemes are axiomatized by
essentially $\Sigma^1_{n\geq1}$ sentences. They thus appear to be set
existence principles which are not closure conditions, since they do not assert
that (for example) $\PowN$ is closed under the existence of $\Sigma^0_1$
definable sets, but merely that all sets definable by $\Sigma^0_1$ formulas
without set parameters exist.

This presents a challenge to the view defended in this section: if not all set
existence principles are closure conditions, then the claim that I have provided
an analysis of the notion of a set existence principle in terms of closure
conditions is in jeopardy. The obvious response to this worry is that
parameter-free comprehension schemes are not mathematically natural: they are
not equivalent to ordinary mathematical theorems. This is clearly a defeasible
claim---a core mathematical theorem equivalent to parameter-free $\Sigma^0_1$-%
comprehension could be found tomorrow, after all---and moreover, although
parameter-free comprehension schemes are not equivalent to ordinary mathematical
theorems as studied in reverse mathematics, they are often equivalent to
particular instances of those theorems. For instance, the parameter-free
$\Sigma^0_1$-comprehension scheme is equivalent to the restriction of the
Bolzano--Weierstraß theorem to a particular computable, bounded sequence of
real numbers.

This line of argument should be resisted. While in a narrow, technical sense any
derivable sentence is a theorem, in ordinary mathematical practice we do not
grant this appellation so lightly. We have strong antecedent reasons to hold
that there is a difference in degree, if not in kind, between theorems on the
one hand, and mere facts on the other. The Bolzano--Weierstraß theorem is more
informative, more general, more deep, and more useful than its instances,
considered individually or even collectively: unlike them, it is deservedly
called a theorem. I do not intend to precisely articulate the distinction
between theorems and mere mathematical facts---I simply note that it is a
distinction in mathematical practice that, absent compelling evidence to the
contrary, we should treat as a substantial conceptual distinction with attendant
explanatory power. On the basis of this distinction, $\Sigma^0_1$-comprehension
with parameters is mathematically natural, while $\Sigma^0_1$-comprehension
without parameters is not. The restriction in the comprehensiveness constraint
(\ref{se_comprehensiveness}) to mathematically natural principles reflects the
aim of the present paper, namely to determine what kinds of theories can stand
as explications of the concept of a set existence principle as it appears in
reverse mathematics, rather than in mathematics or logic in general. With this
in mind, the fact that parameter-free comprehension schemes are not equivalent to
core mathematical theorems means that they should not be considered to be
counterexamples to the analysis of set existence principles as closure
conditions on $\PowN$.

By striking a balance closer to triviality than noncomprehensiveness, the view
that reversals track closure conditions accommodates the most central part of
reverse mathematics, namely the study of mathematically natural $\Pi^1_2$
theorems. Although such a general account does not, by itself, offer substantial
local explanations of the significance of particular reversals, it does at least
offer a framework within which more fine-grained theorising can be done. The
explanatory power offered by SEC can be partially recovered by acknowledging
that some closure conditions are comprehension schemes, and that comprehension
schemes are a class of principles with distinctive qualities, such that their
necessary use in the proof of an ordinary mathematical theorem will allow
distinctive kinds of explanation. Other classes of principles, such as
separation schemes, may also allow for explanations of the significance of
reversals to any system in that class.

On this version of the standard view, all that can be said in general about the
significance of the equivalences to set existence principles proved in reverse
mathematics is that they show that crucial theorems in diverse areas of ordinary
mathematics require that $\PowN$ satisfies particular closure conditions. These
closure conditions can be captured by natural axioms with an arguably logical
character.
An individual reversal demonstrates the closure condition required to support a
given part of ordinary mathematics, and in some sense picks out an intrinsic
feature of a theorem, namely the resources required to prove it, whether that
be compactness or transfinite recursion. This feature is a proof-invariant
property: every proof of the theorem in question must at some point make use of
this property, although it may appear in different guises.

\section*{Funding}

This work was supported by the Arts and Humanities Research Council [doctoral
studentship, 2011--14]; and the Leverhulme Trust [International Network grant
``Set Theoretic Pluralism''].

\section*{Acknowledgements}

This article is drawn from my doctoral research at the University of Bristol,
and builds builds on work by Walter Dean and Sean Walsh which was unpublished
during the writing of this article, but which has now appeared in part as
\citep{DeaWal2016}. I would like to thank them for their generosity in sharing
their research, and their helpful comments on my own.
I would also like to thank
Marianna Antonutti Marfori,
Michael Detlefsen,
Leon Horsten,
Øystein Linnebo,
Toby Meadows,
Richard Pettigrew,
and Sam Sanders,
for valuable discussions of the topics addressed herein;
the audiences of talks at Helsinki, Leuven, London, and Munich, where I
presented earlier versions of this material;
and the editor and two anonymous referees, whose comments helped sharpen this
paper considerably.

\end{document}